\newtheorem{theorem}{Theorem}%[section]
\newtheorem{corollary}[theorem]{Corollary}
\newtheorem{lemma}[theorem]{Lemma}
\newtheorem{observation}[theorem]{Observation}
\newcommand{\floor}[1]{\left\lfloor{#1}\right\rfloor}
\newcommand{\ceil}[1]{\left\lceil{#1}\right\rceil}
\newcommand{\abs}[1]{\left\vert{#1}\right\vert}
\begin{document}

\title{Sets avoiding a rainbow solution to the generalized Schur equation}
\author{
Ervin Gy\H{o}ri\thanks{HUN‐REN Alfréd Rényi Institute of Mathematics, Budapest, Hungary.} \and
Zhen He\thanks{School of Mathematics and Statistics, Beijing Jiaotong University, Beijing, China.} \and
Zequn Lv\thanks{Department of Mathematical Sciences, Tsinghua University Beijing, China.} \and
Nika Salia\thanks{Department of Mathematics, King Fahd University of Petroleum \& Minerals, Dhahran, Saudi Arabia.} \and
Casey Tompkins\footnotemark[1] \and
Kitti Varga\thanks{Department of Computer Science, Budapest University of Technology and Economics, Budapest, Hungary.} \and
Xiutao Zhu\thanks{School of Mathematics, Nanjing University of Aeronautics and Astronautics, Nanjing, China.}
}
\date{}
\maketitle

\begin{abstract}
A classical result in combinatorial number theory states that the largest subset of  $[n]$ avoiding a solution to the equation $x+y=z$ is of size $\lceil n/2 \rceil$. 
For all integers $k>m$, we prove multicolored extensions of this result where we maximize the sum and product of the sizes of sets $A_1,A_2,\dots,A_k \subseteq [n]$ avoiding a rainbow solution to the Schur equation $x_1+x_2+\dots+x_m=x_{m+1}$.  Moreover, we determine all the extremal families. 
\end{abstract}

\section{Introduction}
Given an equation $E$, a classical problem is to determine the largest subset $S$ of $[n]=\{1,2,\dots,n\}$ for which there is no way to assign elements of $S$ to the variables of $E$ to yield a solution of $E$.
Typically we allow that an element from the set $S$ is assigned to multiple variables.
In the simple case of the equation $x+y=z$, it is well-known that the maximum is $\ceil{n/2}$, which, if $n\geq 10$, is attained by taking $S$ to be either the set of odd numbers in $[n]$ or the $\ceil{n/2}$ largest numbers; a precise characterization for all $n$ of the extremal cases is given in~\cite{chung1997integer}. 
In the case of the equation $x+y=2z$, that is, a $3$-term arithmetic progression, Roth~\cite{roth1953certain} proved that any set with no solution has size $o(n)$ (here, it is required that variables are assigned distinct elements, otherwise the problem is trivial).
Obtaining more detailed estimates for the size of sets avoiding a $3$-term arithmetic progression is a notoriously difficult problem.  

For the equation $x+y=3z$, Chung and Goldwasser~\cite{chung1997integer} proved that the maximum size of a set $S\subseteq [n]$ with no solution to this equation is $\lceil n/2 \rceil$ for $n\ge 5$, thereby settling a conjecture of Erd\H{o}s. Moreover, for sufficiently large $n$ they proved that the unique extremal construction is the set of odd positive integers less than or equal to $n$. 
Chung and Goldwasser~\cite{chung1996maximum} gave a construction for a large subset of $[n]$ with no solution to $x+y=kz$ for a constant integer $k\ge 3$. 
These constructions were found in an investigation of the natural continuous version of this problem wherein one maximizes the Lebesgue measure of a subset of $[0,1]$ which avoids a solution to the equation. 
The constructions consist of a union of a constant number of intervals. 
For $k\ge 4$, they settled this continuous version. The original discrete version was settled for $k\ge 4$ by Baltz, Hegarty, Knape, Larsson, and Schoen~\cite{02221759}.
For the continuous version of case $k=3$, it was shown by Matolcsi and Ruzsa~\cite{matolcsi2013sets} that an upper bound of $\frac{1}{2}-\frac{1}{114}$ holds, which implies that the continuous and discrete versions of the problem are fundamentally different.  

A set of integers which avoids the equation $w+x=y+z$ except in the trivial case $\{w,x\}=\{y,z\}$ is known as a Sidon set.  
Erd\H{o}s and Tur\'an~\cite{erdHos1954problem} showed that the maximum size of a Sidon set $A\subseteq [n]$ has order $\sqrt{n}$. 
The general problem of finding the largest subset of $[n]$ avoiding a given linear equation was initiated by Ruzsa~\cite{ruzsa1993solving, ruzsa2}. 
He proved, among many other results, that for homogeneous linear equations $E$:~$a_1x_1+\cdots+a_mx_m=0$ a maximal subset of $[n]$ avoiding $E$ has size $o(n)$ if $\sum_{i=1}^m a_i=0$ and $\Theta(n)$ otherwise.
For equations of the form $ax+by=z$, where $a$ and $b$ are natural numbers, Handcock and Treglown~\cite{hancock2017solution} showed that the largest solution-free subsets have the form $\big[ \lfloor \frac{n}{a+b}\rfloor+1,n \big]$ when $n$ is sufficiently large and $a \ge 2$.
Handcock and Treglown~\cite{hancock2016solution} also attained several results for equations of the form $ax+by=cz$ (see also Hegarty~\cite{hegarty2007extremal} for further results on such equations). 

The equations we consider in the present paper have the form $x_1+x_2+\dots+x_m=x_{m+1}$.
Equations of this type are known as generalized Schur equations and have been studied extensively in Ramsey-type settings (see~\cite{budden2024rainbow} for a recent anti-Ramsey result for such equations). For the extremal problem, a simple lower bound is obtained by the set $\{\floor{n/m}+1,\floor{n/m}+2,\dots,n\}$, which has size $n-\floor{n/m}$. 
This result is well known and is a consequence of some more general results (for example, of Corollary~27 in~\cite{hancock2017solution}).  

In this paper, we study a so-called cross~\cite{hilton1977intersection}, multicolor~\cite{bollobas2004multicoloured,keevash2004multicolour} 
or rainbow~\cite{chakraborti2022rainbow,gyHori2022some} version of the problem of finding a maximum solution-free set for a given equation.
In particular, for the generalized Schur equation $x_1+x_2+\dots+x_m=x_{m+1}$, we consider the problems of maximizing $\sum_{i=1}^k |A_i|$ or $\prod_{i=1}^k |A_i|$, where $A_i \subseteq [n]$ for each $i \in [k]$ and there is no solution to the equation in which each of the variables of the solution comes from a distinct set $A_i$. 
We call such a solution a rainbow solution. We will make use of the following notation: let $[n]=\{1,2,\dots,n\}$ for any positive integer $n$, and let $[a,b]=\{a,a+1,\dots,b\}$ for any integers $a$ and $b$ with $a<b$.  We refer to the sets $[a,b]$ as intervals.

We prove an optimal upper bound on $\sum_i |A_i|$ and classify all extremal families. Assuming that each $A_i$ is nonempty, the maximum total size is asymptotically $k\Bigl(n-\frac{n}{m}\Bigr)$ as a function of $n$. We show that there are three types of optimal constructions: (i) taking $m$ sets equal to $[n]$ and leaving the remaining sets empty so that the equation is trivially never satisfied; (ii) choosing all $A_i$ to be approximately the interval $\Bigl[\frac{n}{m},n\Bigr]$; and (iii) in the case when $m=2$ and $n$ is odd, letting each $A_i$ consist of the odd integers in $[n]$. The precise result is stated below.

\begin{theorem}\label{Theorem_Main}
Let $m\geq 2$, $k\geq m+1$ and $n\geq m+1$ be integers, and let $\{A_i\}_{i\in[k]}$ be non-empty subsets of $[n]$. Write $n = mq + r$ with integers $q$ and $r$ satisfying $0 \le r < m$.
If there is no rainbow solution to $x_1+x_2+\cdots+x_m=x_{m+1}$, then
\begin{equation*}\label{Equation_Main_Theorem_Upper_bound}
\sum_{i=1}^{k}|A_i|\leq k(n-q)+m-(r+1).    
\end{equation*}
If equality holds, then 
\begin{itemize}
    \item the family $\{A_i\}_{i\in[k]}$ contains $k-m$ sets equal to $[q+1,n]$ and the rest of the sets are equal to $[t_i,n]$ for any $i\in[m]$, such that $\sum_{i=1}^m t_i=n+1$, where $1\le t_i\leq q+1$, or 
    
     \item $k=m+1$, $n=qm$, the family $\{A_i\}_{i\in[m+1]}$ contains $m$ sets equal to $[q,n]$ and the remaining one is equal to $[q+1,n-1]$, or

    \item $m=2$, $n$ is odd, and $A_i= \big\{ x\in[n] \colon \, \text{$x$ is odd} \big\}$ for all $i\in [k]$.
\end{itemize}
\end{theorem}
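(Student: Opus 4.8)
The plan is to prove the upper bound by induction on $k$, to treat the base case $k=m+1$ directly, and then to read off the extremal families by tracking the cases of equality through both parts. It is convenient to reformulate the target inequality as a lower bound on the number of omitted elements: writing $H=\sum_{i}(n-\abs{A_i})$, the claimed bound $\sum_i\abs{A_i}\le k(n-q)+m-(r+1)$ is equivalent to $H\ge kq-m+r+1$, so the whole problem becomes one of showing that a rainbow-free family must collectively miss many points of $[n]$.

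For the induction, the key enabling statement is a saturation lemma: \emph{any rainbow-solution-free family contains a set of size at most $n-q$}; equivalently, if every $A_i$ satisfied $\abs{A_i}\ge n-q+1$, a rainbow solution would exist. The intuition is that $\abs{A_i}\ge n-q+1$ forces $\min A_i\le q$, so any $m$ chosen summand sets have minimums totalling at most $mq\le n$, and I would then show that the $m$-fold sumset $A_{i_1}+\dots+A_{i_m}$ must meet the remaining set inside $[n]$. The mechanism is additive: each $A_i$ omits at most $q-1$ elements, and a sumset estimate controlling how far an $m$-fold sumset can deviate from an interval (the number of gaps it has below its top is bounded in terms of the gaps of the summands) shows the sumset and the target set cannot be disjoint. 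Granting this, the inductive step is immediate: for $k\ge m+2$, pick a set $A_j$ with $\abs{A_j}\le n-q$, delete it (a subfamily of a rainbow-free family is still rainbow-free), apply the induction hypothesis to the remaining $k-1\ge m+1$ sets, and add back $\abs{A_j}\le n-q$; the two bounds combine to exactly $k(n-q)+m-(r+1)$.

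The base case $k=m+1$ is where the exact constant $m-(r+1)$ is pinned down, and I expect it to be the main obstacle. Here a rainbow solution must use all $m+1$ sets, so being rainbow-free is equivalent to the $m+1$ disjointness conditions $A_j\cap\bigl(\sum_{i\neq j}A_i\bigr)=\emptyset$ in $[n]$, one for each choice of the ``sum'' set. The desired bound is $H\ge (m+1)q-m+r+1$, and I would argue that these $m+1$ conditions jointly force this many holes. The tension to exploit is that if the summand sets happen to be intervals $[\mu_i,n]$, then their sumset is the interval $\bigl[\sum_i\mu_i,\,mn\bigr]$, forcing the remaining set to omit every point of $[\max_j\mu_j,n]$ lying above $\sum_i\mu_i$; whereas if a summand set is far from an interval it already contributes many holes directly to $H$. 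Making this trade-off quantitative and uniform over all $m+1$ choices of the sum-set---so that holes counted for different choices are not over-counted---is the delicate step, and it is precisely here that additive-combinatorial control of the deviation of a sumset from an interval is required.

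Finally, for the classification I would trace equality. In the inductive step, equality forces the deleted set to have size exactly $n-q$ and the remaining family to be extremal, hence (by the induction hypothesis) one of the listed families on $k-1$ sets; a short rigidity argument---the appended set must be rainbow-compatible with an already-extremal family---shows the only admissible size-$(n-q)$ sets to add are $[q+1,n]$, or the odd-set when $m=2$, yielding exactly families of type (i) and (iii). In the base case, equality in the hole count forces each summand set to be a full interval $[t_i,n]$ with $\sum_i t_i=n+1$ and $1\le t_i\le q+1$, which is family (i) for $k=m+1$; the boundary possibilities $n=qm$ and $(m=2,\ n\text{ odd})$ are what produce the sporadic families (ii) and (iii). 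I would finish by checking that the odd family is extremal only when $m=2$: its size is $\ceil{n/2}$ per set, which equals the baseline value $n-q$ exactly when $\floor{n/m}=\floor{n/2}$ with $n$ odd, i.e.\ $m=2$, while for $m\ge 3$ this value is strictly below $n-q$, so no analogue of (iii) can attain equality.
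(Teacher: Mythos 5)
Your outer skeleton (reduce to the base case $k=m+1$ by repeatedly deleting a small set) is a legitimate alternative to the paper's route, which instead compresses to a nested family $A_1\supseteq\cdots\supseteq A_m\supseteq A_{m+1}=\cdots=A_k$ via Lemma~\ref{Lemma_monotone} and then treats all $k$ at once. But your proposal has a genuine gap in both of its load-bearing components. First, the direct sumset proof you sketch for the ``saturation lemma'' does not close. If every $A_i$ has size at least $n-q+1$, then indeed $\min A_i\le q$ and the standard estimate says $A_{i_1}+\cdots+A_{i_m}$ covers $\bigl[\sum_j \min A_{i_j},\,mn\bigr]$ up to at most $\sum_j g_{A_{i_j}}\le m(q-1)$ gaps. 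The problem is that the part of this sumset lying inside $[n]$ is an interval of length only $n-\sum_j\min A_{i_j}+1$, which can be as small as $r+1$; subtracting up to $m(q-1)$ gaps leaves nothing, so disjointness from $A_{i_{m+1}}$ yields no contradiction by counting. (Concretely, with $m=2$ the inequality one extracts is $n+2\le 3q$, which holds for all large $n$.) The lemma itself is true, but the clean way to get it is as a \emph{corollary of the base case}: if all of $A_1,\dots,A_{m+1}$ had size $\ge n-q+1$ their total would exceed $(m+1)(n-q)+m-(r+1)$. So the saturation lemma cannot be used as an independent stepping stone toward the base case; the logical order must be base case first.

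Second, and more seriously, the base case $k=m+1$ --- which you correctly identify as ``where the exact constant is pinned down'' and ``the delicate step'' --- is exactly the content of the theorem, and your proposal describes the tension to be exploited without resolving it. The paper's actual argument here is substantial: after the nesting reduction it splits on whether $t_{m+1}\le q$ and whether $t_1+\cdots+t_m\le q$, uses translates $A_1+t_2+\cdots+t_m$ against $A_{m+1}$ in the first two cases, and in the main case applies $|A_1+\cdots+A_m|\ge\sum_{i=1}^m|A_i|-(m-1)$ together with its equality characterization (all summands arithmetic progressions with a common difference) to force the $A_i$ to be intervals $[t_i,n]$. Moreover $m=2$ requires a genuinely different argument (the paper's Case~1/Case~2 analysis around whether some $a_3$ and $n-a_3$ both lie in $A_3$), precisely because the extremal family (iii) is not an interval and your ``interval versus many holes'' dichotomy breaks there. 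Finally, your equality tracing in the inductive step needs an explicit check that the sporadic family (ii) on $m+1$ sets cannot be extended by a set of size $n-q$ while remaining rainbow-free (it cannot: any such set must avoid $[1,q-1]$ and $n$, and $[q,n-1]$ itself creates the solution $(m-1)\cdot q+q=n$), which is a verifiable but currently missing step. As it stands the proposal is a plan, not a proof.
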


The proof of the theorem is split into two cases, $m\geq 3$ and $m=2$, each relying on the following lemma. 
Also, this lemma is the key for determining the extremal number and the structure of the extremal families in the case when the condition on the sets $A_i$ being non-empty is dropped.

\begin{lemma}\label{Lemma_monotone}
    If the statement of Theorem~\ref{Theorem_Main} holds for the family of sets $\{A_i\}_{i\in[k]}$ with  the additional condition
    \[
[n] \supseteq A_1 \supseteq A_2 \supseteq \cdots \supseteq A_m \supseteq A_{m+1} = A_{m+2} = \cdots = A_k,
\]
then it holds in general.
\end{lemma}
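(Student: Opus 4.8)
The plan is to transform an arbitrary rainbow-solution-free family $\{A_i\}_{i\in[k]}$ into one satisfying the monotone hypothesis of the lemma, using only operations that preserve the total size $\sum_i|A_i|$ and the property of having no rainbow solution, and for which the extremal configurations can be tracked back. Two operations will suffice: an \emph{uncrossing} step that replaces an incomparable pair by its union and intersection, driving the family toward a chain; and a \emph{tail-equalization} step that makes the $k-m$ smallest sets equal. Assuming Theorem~\ref{Theorem_Main} holds for monotone families, the resulting monotone family obeys the bound, and since neither operation decreases $\sum_i|A_i|$, the original family obeys it too.

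First I would show that if $A_a,A_b$ are incomparable, then replacing them by $A_a\cup A_b$ and $A_a\cap A_b$ (leaving the other sets fixed) preserves rainbow-solution-freeness. The total size is unchanged, and any rainbow solution in the new family can be pushed back to the old one: a used value lying in $A_a\cup A_b$ lies in $A_a$ or in $A_b$, a used value in $A_a\cap A_b$ lies in both, and in every case the at most two affected values can be reassigned to the indices $a,b$ so that each value still lies in its set, all indices remain distinct, and each variable keeps its original role. Since each uncrossing of an incomparable pair strictly increases $\sum_i|A_i|^2$, which is bounded, the process terminates in a chain $B_1\supseteq B_2\supseteq\cdots\supseteq B_k$. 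A crucial bookkeeping fact is that uncrossing preserves, for every element $x\in[n]$, the number of sets containing $x$; hence the terminal chain is completely determined by this degree sequence, via $B_i=\{x\in[n]\colon x \text{ lies in at least } i \text{ of the sets } A_j\}$.

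Next I would prove that in any chain the tail sets $B_{m+2},\dots,B_k$ are irrelevant to rainbow-solution-freeness. Given a rainbow solution using distinct indices $i_1<\cdots<i_{m+1}$ with values $x_\ell\in B_{i_\ell}$, each $x_\ell$ also lies in every larger set $B_j$ with $j\le i_\ell$; assigning to $x_\ell$ the admissible index set $\{1,\dots,\min(i_\ell,m+1)\}$ and using $i_\ell\ge\ell$, Hall's condition holds, so the values can be rematched bijectively to the indices $1,\dots,m+1$ with roles preserved. Thus a chain has a rainbow solution if and only if one exists among its top $m+1$ sets. Consequently, setting $B_{m+1}=B_{m+2}=\cdots=B_k$ (enlarging each tail set to $B_{m+1}$) does not touch the top $m+1$ sets, hence preserves rainbow-solution-freeness, and it does not decrease $\sum_i|B_i|$; the family is now monotone in the sense required by the lemma. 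This yields the upper bound.

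Finally, for the equality case, I would observe that when $\sum_i|A_i|$ meets the bound every inequality above is tight: the tail-equalization must be a no-op, so the terminal chain already satisfies $B_{m+1}=\cdots=B_k$ and is itself a monotone extremal family, hence one of the three listed types (each of which is a chain). Because this chain is determined by the degree sequence it shares with $\{A_i\}$, the characterization for general families follows once one shows that each extremal degree sequence admits a \emph{unique} rainbow-free realization, namely the chain itself; equivalently, that any incomparable pair in an extremal family already forces a rainbow solution. I expect this uniqueness to be the main obstacle, since it fails for general degree sequences and must exploit the rigidity of the extremal profiles. The most delicate instance is the second extremal type ($k=m+1$, $n=qm$, with sets $[q,n]$ and $[q+1,n-1]$), where replacing a set by the interval of the same size already creates a rainbow solution, so the argument must be genuinely structural rather than a size-only compression.
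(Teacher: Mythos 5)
Your proposal is correct and follows essentially the same route as the paper: the chain you reach by iterated uncrossing is exactly the paper's layer family $B_i=\bigl\{x\in[n]:\ x\text{ lies in at least }i\text{ of the }A_j\bigr\}$ (which the paper simply defines directly), your Hall's-condition rematching is the paper's ``order the elements by index and greedily reassign'' argument, and your tail-equalization is the paper's observation that an element contained in at least $m+1$ sets may be assumed to lie in all of them. The equality-case rigidity that you honestly flag as the main remaining obstacle is precisely the step the paper also leaves to the reader (``by careful analysis of the structure it is easy to see that $\{B_i\}_{i\in[k]}=\{A_i\}_{i\in[k]}$''), so your sketch is neither weaker nor stronger than the published argument on that point.
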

\begin{proof}
   Let $\{A_i\}_{i\in [k]}$ be a family of subsets of $[n]$ with no rainbow solution to $x_1+x_2+\dots+x_m=x_{m+1}$ which maximizes $\sum_{i=1}^k |A_i|$.
Define an auxiliary family $\{B_i\}_{i\in[k]}$ by
\[
B_i \coloneqq \Bigl\{ x\in [n] : \Bigl|\{ j\in[k] : x\in A_j \}\Bigr| \ge i \Bigr\}.
\]
Then, the family $\{B_i\}_{i\in[k]}$ is a nested family and $\sum_{i=1}^k |A_i| = \sum_{i=1}^k |B_i|$.
Moreover, the family $\{B_i\}_{i\in[k]}$ admits no rainbow solution. Indeed, if it did, one could order the elements of that solution in increasing order according to the index of the $B_i$ from which they originate, and then greedily construct a rainbow solution for $\{A_i\}_{i\in[k]}$ in that order, contradicting our assumption.
Furthermore, if an element is contained in at least $m+1$ sets, we may assume it belongs to all of them. 
If the family $\{B_i\}_{i\in[k]}$ equals one of the families achieving the upper bound in the statement of Theorem~\ref{Theorem_Main}, by careful analysis of the structure it is easy to see that $\{B_i\}_{i\in[k]}=\{A_i\}_{i\in[k]}$.
\end{proof}

Note that Lemma~\ref{Lemma_monotone} implies that allowing empty sets in Theorem~\ref{Theorem_Main}, the extremal families are either the ones listed in Theorem~\ref{Theorem_Main}, or $A_i=[n]$ for each $i\in [m]$ and $A_i=\emptyset$ otherwise. Indeed, if an extremal family $\{ A_i \}_{i\in[k]}$ contains at least $m+1$ non-empty sets, then by Lemma~\ref{Lemma_monotone}, we can assume none of the sets are empty and we are done by Theorem~\ref{Theorem_Main}. If, however, an extremal family $\{ A_i \}_{i\in[k]}$ contains at most $m$ non-empty sets, then we clearly get $A_i=[n]$ for each $i\in [m]$ and $A_i=\emptyset$ otherwise.

Finally, by Theorem~\ref{Theorem_Main}, the following corollary is immediate.
\begin{corollary}
Let $m\geq 2$ and $k\geq m+1$ be integers, and let $\{A_i\}_{i\in[k]}$ be subsets of $[n]$. Write $n = mq + r$ with integers $q$ and $r$ satisfying $0 \le r < m$.
If there is no rainbow solution to $x_1+x_2+\cdots+x_m=x_{m+1}$, then
\[
\prod_{i=1}^k |A_i| \le \left(n-\floor{\frac{n}{m}}+1 \right)^{m-(r+1)}\left(n-\floor{\frac{n}{m}}\right)^{n-m+(r+1)}.
\]
If equality holds, then either the family $\{A_i\}_{i\in[k]}$ contains $m-(r+1)$ sets equal to $[q,n]$ and the rest of the $n-m+(r+1)$ sets are equal to $[q+1,n]$; 
or $m=2$ and $n$ is odd and $A_i=\{x\in[n]: \text{$x$ is odd} \}$ for all $i\in[k]$.
  
\end{corollary}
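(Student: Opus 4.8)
The plan is to obtain the product bound as a direct consequence of the sum bound in Theorem~\ref{Theorem_Main}, combined with the fact that, for a prescribed sum, a product of positive integers is largest when the factors are as balanced as possible. Throughout, set $q=\floor{n/m}$, so that $n-q=n-\floor{n/m}$ and $n-q+1=n-\floor{n/m}+1$, and write $S\coloneqq k(n-q)+m-(r+1)$ for the upper bound of Theorem~\ref{Theorem_Main}. First I would dispose of the degenerate case: if some $A_i$ is empty, then $\prod_{i=1}^k|A_i|=0$, which is strictly below the (positive) claimed bound, so the inequality holds and cannot be tight. Hence it suffices to treat families in which every $A_i$ is non-empty; for these Theorem~\ref{Theorem_Main} applies and yields $\sum_{i=1}^k|A_i|\le S$.

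Second, I would invoke the elementary discrete inequality that among $k$-tuples of positive integers with a prescribed sum $s$ the product is maximized exactly by the most balanced tuple (if two parts differ by at least $2$, moving one unit from the larger part to the smaller strictly increases the product), and that this maximal product is \emph{strictly increasing} in $s$ (adding one unit to a smallest part strictly increases it). Since $k\ge m+1$ and $0\le r<m$, we have $0\le m-(r+1)<k$, so writing $S=k(n-q)+\bigl(m-(r+1)\bigr)$ exhibits the balanced $k$-tuple for the sum $S$: it consists of $m-(r+1)$ entries equal to $n-q+1$ and $k-m+(r+1)$ entries equal to $n-q$. Combining $\sum_{i=1}^k|A_i|\le S$ with this monotonicity gives
\[
\prod_{i=1}^k|A_i|\ \le\ \bigl(n-q+1\bigr)^{m-(r+1)}\bigl(n-q\bigr)^{k-m+(r+1)},
\]
which is the asserted bound (the exponent of $n-q$ being the number $k-m+(r+1)$ of remaining sets).

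For the equality analysis, both inequalities used must be equalities. Strict monotonicity of the balanced product forces $\sum_{i=1}^k|A_i|=S$, so $\{A_i\}_{i\in[k]}$ must be one of the three extremal families of Theorem~\ref{Theorem_Main}; tightness of the balancing step forces the multiset of sizes to be exactly $n-q+1$ with multiplicity $m-(r+1)$ and $n-q$ with multiplicity $k-m+(r+1)$. It then remains to decide which extremal families have this balanced size profile. In case~(i) the sizes are $n-q$ (the $k-m$ copies of $[q+1,n]$) together with $n-t_i+1$ for $i\in[m]$; balance forces $t_i\in\{q,q+1\}$ for every $i$, and then $\sum_{i=1}^m t_i=n+1=mq+(r+1)$ pins down exactly $m-(r+1)$ indices with $t_i=q$ and $r+1$ indices with $t_i=q+1$, that is, precisely $m-(r+1)$ sets equal to $[q,n]$ and the remaining $k-m+(r+1)$ equal to $[q+1,n]$. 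In case~(iii) we have $m=2$ and $n$ odd, so $r=1$ and $m-(r+1)=0$; every set is the set of odd integers, of common size $(n+1)/2=n-q$, which is balanced and attains the bound.

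The one point requiring care, which I expect to be the main (and essentially only real) obstacle, is to rule out case~(ii). There $n=qm$, so $r=0$, and the family consists of $m$ sets $[q,n]$ of size $n-q+1$ together with one set $[q+1,n-1]$ of size $n-q-1$. This profile is not balanced, and indeed its product $\bigl(n-q+1\bigr)^{m}\bigl(n-q-1\bigr)$ compared with the balanced product $\bigl(n-q+1\bigr)^{m-1}\bigl(n-q\bigr)^{2}$ differs by the factor $\bigl((n-q)^2-1\bigr)/(n-q)^2<1$, so it is strictly smaller and never attains the product bound. Consequently the families attaining equality are exactly those listed in the corollary. Aside from this bookkeeping, the statement is immediate from Theorem~\ref{Theorem_Main} and the discrete AM--GM inequality.
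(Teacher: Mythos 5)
Your proof is correct and is precisely the deduction the paper intends when it declares the corollary ``immediate'' from Theorem~\ref{Theorem_Main}: the sum bound combined with the discrete balancing (AM--GM) inequality, with the equality cases read off from the three extremal families of the theorem and case~(ii) correctly ruled out because its size profile $\bigl(n-q+1,\dots,n-q+1,n-q-1\bigr)$ is unbalanced. Note that you have silently read the exponent $n-m+(r+1)$ in the stated bound as $k-m+(r+1)$, which is the only reading under which the factors number $k$ in total; that appears to be a typo in the paper, and your interpretation is the right one.
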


In the following, we prove Theorem~\ref{Theorem_Main} in two parts: when $m \ge 3$ and when $m = 2$.

\section{Proof of Theorem~\ref{Theorem_Main} for \texorpdfstring{$\boldsymbol{m\geq 3}$}{}}

By Lemma~\ref{Lemma_monotone}, we assume 
\[
[n] \supseteq A_1 \supseteq A_2 \supseteq \cdots \supseteq A_m \supseteq A_{m+1} = A_{m+2} = \cdots = A_k.
\]

%Note that if $A_t=\emptyset$ for some $t$, then we have $A_i=\emptyset$ for any $i\in [m+1,k]$, and from the maximality assumption, we have $A_i=[n]$ for any $i\in [m]$. Thus we have $\sum_{i=1}^{k}|A_i|=nm$, and we are done.
%If $k>m+1$, then we have $nm< k(n-q)+(m-(r+1))$ and we are done.
%If $k=m+1$, then we have the extremal construction if $q\geq m-1$ otherwise $nm< k(n-q)+(m-(r+1))$.

%From here, we may assume $A_i\neq \emptyset$ for all $i\in [k]$.
Let $t_i = \min A_i$ for each $i \in [k]$. Since the family $\{A_i\}_{i\in[k]}$ is nested, it follows that
\[
t_1 \leq t_2 \leq \cdots \leq t_m \leq t_{m+1} = t_{m+2} = \cdots = t_k.
\]
We now split the proof into three cases based on the values of $t_{m+1}$ and the sum $t_1+t_2+\cdots+t_m$.

\medskip

\noindent \textbf{Case 1:} $t_{m+1} \le q$.

Define the set $X \coloneqq A_1 + t_2 + t_4 + t_5 + \cdots + t_{m+1}$,
and consider the set~$A_3$. Since there is no rainbow solution, we have $X \cap A_3 = \emptyset$, and clearly $X \cup A_3 \subseteq [t_3, n + t_2 + t_4 + t_5 + \cdots + t_{m+1}].$
We obtain
\[
|A_1| + |A_3| = |X| + |A_3| 
\le n + (t_2 - t_3) + t_4 + \cdots + t_{m+1} + 1 
\le n + (m-2)t_{m+1} + 1 
\le n+(m-2)q + 1.
\]
Thus, if $r\neq 0$ or $m\neq 3$,  we deduce 
\begin{align*}
\sum_{i=1}^{k} |A_i|
&\le 
|A_1|+|A_2|+|A_3|+|A_4| +
(k-4)\floor{\frac{|A_1|+|A_3|}{2}}
\leq 2(|A_1|+|A_3|)+ (k-4)\floor{\frac{|A_1|+|A_3|}{2}} \\ 
&\le  2( n+(m-2)q + 1)+ (k-4)\floor{\frac{n+(m-2)q + 1}{2}}
< k(n-q)+ \big( m-(r+1) \big).  
\end{align*}
The verification of the final inequality is straightforward by considering two cases separately: for $r\neq 0$, the equation holds without taking the floor function, and for $r=0$, the inequality holds since $m>3$. Since the final inequality is strict, equality in Theorem~\ref{Theorem_Main} does not occur in this case.

Note that if $r=0$ and $m=3$, we have 
\[
 \sum_{i=1}^{k} |A_i|\leq k(n-q)+ \big( m-(r+1) \big).
\]
Thus either we are done or $\sum_{i=1}^{k} |A_i|=k(n-q)+ \big( m-(r+1) \big)$, which means $|A_1|+|A_3|=|A_2|+|A_4|=n+(m-2)q+1$. Thus $A_1=A_2$ and $A_3=A_4$ since those sets are nested.
Even more, $t_2=t_3$ and $t_4=q$, thus $X \cup A_3 = [t_3, n + t_3 + t_4]=  [t_3, n + t_3+q]$ since $m=3$.
%We have
%\[\sum_{i=1}^{k} |A_i|%\leq
% 2(|A_1|+|A_3|)+ (k-4)\floor{\frac{|A_1|+|A_3|}{2}}\leq
 %2(4q+1)+(k-4)2q
%= k(n-q)+(m-(r+1)).  
%\]
%Suppose we have equality, then $t_2=t_3$ and $X\cup A_3=  [t_3, n+ t_3+q]$, thus $t_4=q$.
Since $A_3=A_4$, we have $t_2=t_3=t_4=q$. 
Since there is no rainbow solution, we have $t_2+t_3+t_4=3q=n\not\in A_1$, hence $n + t_2 + t_4 = 4q + t_3\notin X\subseteq X\cup A_3$ a contradiction to  $X\cup A_3=[t_3, 4q + t_3]$. 
Thus, if $r=0$ and $m=3$, we have $\sum_{i=1}^{k} |A_i|< k(n-q)+ \big( m-(r+1) \big)$. 

Therefore, the equality in Theorem~\ref{Theorem_Main} does not hold in Case~1.

\bigskip

\noindent \textbf{Case 2:} $t_{m+1}\geq q+1$ and $t_1+t_2+\cdots+t_m\leq q$.

Consider the sets $Y \coloneqq A_1+t_2+t_3+\cdots+t_{m}$ and $A_{m+1}$.
Since there is no rainbow solution, we have $Y \cap A_{m+1} = \emptyset$, and clearly $Y \cup A_{m+1} \subseteq [t_1+t_2+\cdots+t_m, n+t_2+\cdots+t_m]$.
If $k=m+1$, we define $A_{m+2} \coloneqq \emptyset$ for simplicity of notation. Then, we have
\[
|A_2|+|A_{m+2}| \le |A_1|+|A_{m+1}| = |Y|+|A_{m+1}| \leq n.
\]
Thus, 
\[
\sum_{i=1}^{m+2} |A_i| = \big( |A_1|+|A_{m+1}| \big) + \big( |A_2|+|A_{m+2}| \big) + \sum_{i=3}^{m} |A_i| \leq nm. 
\]
So if $k\geq m+2$, then
\[
\sum_{i=1}^{k} |A_i|
\leq k\dfrac{1}{m+2} \sum_{i=1}^{m+2} |A_i|
\leq kn\dfrac{m}{m+2} 
< kn\dfrac{m-1}{m} 
<  k(n-q)+ \big( m-(r+1) \big).
\]

If $k=m+1$, then we have  $\sum_{i=1}^{k} |A_i|\leq nm$, and equality holds if $A_i=[n]$ for all $i\in [m]$. Hence $A_{m+1}=\emptyset$ a contradiction of our assumption that $A_i \not= \emptyset$ for any $i\in [k]$.

Therefore, the equality in Theorem~\ref{Theorem_Main} does not hold in Case~2.

\bigskip

\noindent \textbf{Case 3:} $t_{m+1} \geq q+1$ and  $t_1+t_2+\cdots+t_m \geq q+1$.

Consider the sets $Z\coloneqq A_1+A_2+\cdots+A_{m}$ and $A_{m+1}$.
Since there is no rainbow solution, we have $Z \cap A_{m+1} = \emptyset$, and clearly
$Z \cup A_{m+1} \subseteq [q+1,mn]$.
By an elementary argument in additive combinatorics (see, for example, Theorem~2.3 in~\cite{glasscock2011sumset}), we know that 
\[
|Z| = |A_1+A_2+\cdots+A_{m}|\geq |A_1|+|A_2|+\cdots+|A_{m}|-(m-1),
\]
where equality holds if and only if all sets are arithmetic progressions with the same difference.
Thus, we have
\[
\sum_{i=1}^{m+1} |A_i| 
\leq |Z|+(m-1)+|A_{m+1}|
\leq mn - q + (m-1),
\]
and equality holds if and only if $Z \cup A_{m+1}= [q+1,mn]$ and all the sets $A_1, \ldots, A_m$ are arithmetic progressions with the same difference.
So we have
\begin{align*}
\sum_{i=1}^{k} |A_i|
&\leq   \sum_{i=1}^{m+1} |A_i|+ \big( k-(m+1) \big) \floor{\dfrac{1}{m+1} \sum_{i=1}^{m+1} |A_i|}\\
&\leq  mn - q + (m-1)+ \big( k-(m+1) \big) \floor{\dfrac{mn - q + (m-1)}{m+1}}\\
&= mn - q + (m-1)+ \big( k-(m+1) \big)(n-q)
= k(n-q)+ \big( m-(r+1) \big).    
\end{align*}

Here, we have established the upper bound. 
It remains to characterize the structure of the family $\{A_i\}_{i\in[k]}$ when $\sum_{i=1}^k |A_i|$ attains this maximum. Under the assumption that the sets are nested and equality holds, we deduce that the sets $A_1, \ldots, A_m$ are arithmetic progressions with the same difference. We claim that for each $i\in[m]$ the set $A_i$ must have the same difference $1$; otherwise, the total size would be at most $\frac{(n+1)k}{2}<k(n-q)+ \big( m-(r+1) \big)$, a contradiction. Moreover, since equality holds, we have
$Z \cup A_{m+1} = [q+1,mn]$,
so $n\in A_i$ and $A_i=[t_i,n]$ for all $i\in[m]$, thus $Z=[t_1+\cdots+t_m,nm]$
and $A_{m+1}=[q+1,t_1+\cdots+t_m-1]$.

If $\sum_{i=1}^m t_i\geq n+1$, then from the maximality, we obtain $\sum_{i=1}^m t_i= n+1$. Clearly, $t_1 \le \dots \le t_m \le t_{m+1} = q+1$ and $A_{m+1} = \ldots = A_k =[q+1,n]$.  Thus, we have the first extremal family described in Theorem~\ref{Theorem_Main}. 

% If $\sum_{i=1}^m t_i\leq n$, then either $n=mq$ and $t_i=q$ for each $i\in[m]$, or  $\sum_{i=1}^{m-1} t_i+(q+1)\leq n$. 
% As there is no rainbow solution $\sum_{i=1}^{m-1} t_i+(q+1)> n$ otherwise $x_1+t_2+t_3+\cdots+t_{m-1}+(q+1)=n$ for some $x_1\in A_1$, $q+1\in A_{m+1}$ and $n\in A_m$, a contradiction.
% If $n=mq$, $t_i=q$ for $i\in[m]$, then by maximality we have $A_i=[q,n]$ for $i\in[m]$ and $A_i=[q+1,n-1]$, $i\in [m+1,k]$, which gives the second extremal family if $k=m+1$ and does not yield an extremal family if $k>m+1$.

Let us consider the case when $\sum_{i=1}^m t_i\leq n$. Then, we have $\sum_{i=1}^{m-1} t_i + (q+1) > n$ since otherwise $A_1 = [t_1, n]$ would imply that $a_1 + \sum_{i=1}^{m-1} t_i + (q+1) = n$ holds for some $a_1 \in A_1$, contradicting that there is no rainbow solution. Thus, $t_m \le q+1$, so $t_1 \le \ldots \le t_m \le q$. Hence $mq \le n \le \sum_{i=1}^{m-1} t_i + q \le mq$, which implies $n = mq$ and $t_1 = \ldots = t_m = q$. This gives the second extremal family.

This completes the proof of Theorem~\ref{Theorem_Main}.

\section{Proof of Theorem~\ref{Theorem_Main} for \texorpdfstring{$\boldsymbol{m = 2}$}{}}

By Lemma~\ref{Lemma_monotone}, we assume 
\[
[n] \supseteq A_1 \supseteq A_2 \supseteq \cdots \supseteq A_m \supseteq A_{m+1} = A_{m+2} = \cdots = A_k.
\]
Thus, if Theorem~\ref{Theorem_Main} holds for $k=3$ then it holds for $k>3$.
From here, we assume $k=3$ and consider two cases.

\noindent \textbf{Case 1:} there exists $a_3\in [n]$ such that  $a_3$, $n-a_3 \in A_3$.

For any $a_1 \in A_1$ and $a_2\in A_2$, we have $a_1 \not\equiv a_2-a_3 \pmod{n}$ since otherwise there is a rainbow solution $a_1+a_3=a_2$ or $a_2+(n-a_3)=a_1$. 
Thus, for every element $a_1\in A_1$,  either $1\leq a_1+a_3\leq n$ or $1\leq a_1+a_3-n\leq n$, so in both cases $a_1+a_3$ and $a_1+a_3-n$ are not elements of $A_2$. 
Hence we have $|A_1|+|A_2|\leq n$, which implies
\[
|A_1|+|A_2|+|A_3| \le |A_1|+|A_2|+|A_2| \le |A_1|+|A_2|+ \frac{|A_1|+|A_2|}{2} \leq n+ \frac{n}{2}<n+1+\ceil{\frac{n}{2}}.
\]
Therefore, the equality in Theorem~\ref{Theorem_Main} does not hold in this case.

\bigskip

\noindent \textbf{Case 2:} for every $a_3 \in A_3$, we have $n-a_3 \notin A_3$.

Let $t_3 \coloneqq \min A_3$ and $M \coloneqq \max A_3$, and let $d \coloneqq \min (t_3, n-M )$. Then, we have $A_3 \subseteq [d,n-d]$.
By the assumption of the case, we have 
\begin{equation}\label{equation:m=3_A_3}
 |A_3| \leq \floor{\frac{n+1-2d }{2}}= \ceil{\frac{n}{2}}-d.
\end{equation}

The sets $A_1+t_3$ and $A_2$ are disjoint since no rainbow solution exists, and $(A_1+t_3) \cup A_2 \subseteq [1,n+t_3]$. Thus, we have
\begin{equation}\label{Equation:A_1+A_2_1}
   |A_1|+|A_2| = |A_1+t_3|+|A_2|\leq n+t_3<n+t_3+1. 
\end{equation}
The sets $A_1+n-M$ and $n-A_2$ are also disjoint since no rainbow solution exists, and $(A_1+n-M) \cup (n-A_2) \subseteq [0,2n-M]$. 
Thus, we have
\begin{equation}\label{Equation:A_1+A_2_2}
|A_1|+|A_2| = |A_1+n-M|+|n-A_2|\leq 2n - M + 1 = n+1+(n-M).
\end{equation}
By~\eqref{Equation:A_1+A_2_1} and~\eqref{Equation:A_1+A_2_2}, we have 
\[
|A_1|+|A_2| \leq n+1+\min (t_3,n-M) = n+1+d.
\]
Therefore, 
\begin{equation}\label{Equation:m=3_final}
  |A_1|+|A_2|+|A_3| \leq (n+1+d) + \left\lceil \frac{n}{2} \right\rceil - d = n + 1 + \left\lceil \frac{n}{2} \right\rceil.
\end{equation}

We have finished the proof of the upper bound. 
From now on, we assume $|A_1|+|A_2|+|A_3|=n+1+\ceil{\frac{n}{2}}$ and deduce the structure of the sets $A_i$.
Since equality holds in~\eqref{Equation:m=3_final}, we have equality in~\eqref{equation:m=3_A_3} and either in~\eqref{Equation:A_1+A_2_1} or  in~\eqref{Equation:A_1+A_2_2}. 
As the inequality in~\eqref{Equation:A_1+A_2_1} is strict, we have equality in~\eqref{Equation:A_1+A_2_2}. 
Moreover, since~\eqref{Equation:A_1+A_2_1} contains a strict inequality, we have $n-M<t_3$, and so $d=n-M$.
As equality holds in~\eqref{Equation:A_1+A_2_2}, we have 
\[
(A_1+n-M) \cup (n-A_2) = [0,2n-M].
\]
This implies $[M,n]\subseteq A_2\subseteq A_1$.

\begin{observation}\label{Observation:A_3iff}
As in \eqref{equation:m=3_A_3} equality holds, we have $A_3\subseteq [t_3,M]\setminus\{\frac{n}{2}\}$  and  for every $x \in[d,M]\setminus \{\frac{n}{2}\}$ we have  $x \in A_3$ if and only if $n-x \notin A_3$. Moreover, for every $a_3 \in A_3$, we have $n-a_3 \notin A_1$ as $n\in A_2$. 
\end{observation}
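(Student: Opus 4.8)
The plan is to establish the three assertions of the observation in turn; only the middle biconditional genuinely uses the equality hypothesis, while the first and third follow directly from the case hypothesis and the structure already extracted.

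For the containment $A_3\subseteq[t_3,M]\setminus\{n/2\}$, the inclusion in $[t_3,M]$ is immediate from $t_3=\min A_3$ and $M=\max A_3$. To exclude $n/2$, I would simply invoke the Case~2 hypothesis: if $n$ is even and $n/2\in A_3$, then taking $a_3=n/2$ gives $n-a_3=n/2\in A_3$, a direct contradiction; when $n$ is odd there is nothing to exclude since $n/2\notin\mathbb{Z}$.

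The heart of the argument is the biconditional, and here I would re-examine how \eqref{equation:m=3_A_3} was obtained. Recall $d=n-M$, so $[d,M]=[d,n-d]\supseteq A_3$, and this interval decomposes into the pairs $\{x,n-x\}$ together with the singleton $\{n/2\}$ when $n$ is even. The Case~2 hypothesis forbids both members of any pair from lying in $A_3$ and (as just shown) forbids $n/2\in A_3$, so $|A_3|$ is at most the number of such pairs, which is exactly $\ceil{n/2}-d$; this is precisely the bound in \eqref{equation:m=3_A_3}. Assuming equality therefore forces $A_3$ to contain \emph{exactly} one element of each pair $\{x,n-x\}$ with $x\in[d,n-d]\setminus\{n/2\}$. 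Since $[d,M]=[d,n-d]$, this says that for every $x\in[d,M]\setminus\{n/2\}$ exactly one of $x$ and $n-x$ lies in $A_3$, i.e.\ $x\in A_3$ if and only if $n-x\notin A_3$, as claimed.

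Finally, for the last assertion I would argue directly from a rainbow solution. The equality analysis preceding the observation gives $[M,n]\subseteq A_2$, hence $n\in A_2$. Suppose for contradiction that $a_3\in A_3$ but $n-a_3\in A_1$. Then $(n-a_3)+a_3=n$ is a solution of $x_1+x_2=x_3$ with $n-a_3\in A_1$, $a_3\in A_3$, and $n\in A_2$, i.e.\ a rainbow solution, contradicting our standing hypothesis; thus $n-a_3\notin A_1$ for every $a_3\in A_3$. The only delicate point throughout is the parity bookkeeping in the middle step: one must confirm that the number of pairs $\{x,n-x\}$ inside $[d,n-d]$ equals $\ceil{n/2}-d$ in both parities (isolating the fixed point $n/2$ when $n$ is even), so that equality in \eqref{equation:m=3_A_3} translates exactly into ``one element per pair.'' This is where an off-by-one error is easiest to introduce, but it is a purely routine counting check.
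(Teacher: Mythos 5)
Your proof is correct and follows essentially the same (implicit) reasoning as the paper, which states the Observation without proof as an immediate consequence of the pairing $\{x,n-x\}$ underlying the bound in~\eqref{equation:m=3_A_3}, together with $[M,n]\subseteq A_2$ for the final assertion. Your parity bookkeeping for the number of pairs and the rainbow-solution argument for $n-a_3\notin A_1$ are both accurate.
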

From here, we split the proof into two cases depending on whether $t_3<\frac{n}{2}$ or $t_3> \frac{n}{2}$. Note that $t_3 \ne \frac{n}{2}$ since $t_3\in A_3$.

First, assume that $t_3<\frac{n}{2}$. By Observation~\eqref{Observation:A_3iff}, and as $n-M<t_3$, we have $[n-t_3+1,M]\subseteq A_3$. Then $A_3 \subseteq A_2$ and $[M,n] \subseteq A_2$ implies $[n-t_3+1,n] \subseteq A_2$. Hence, since there is no rainbow solution and $t_3 \in A_3$, we have $[n-2t_3+1,n-t_3]\cap A_1=\emptyset$. Also, $t_3 + t_3 < n - t_3 + 1$ as $t_3 \in A_1$, implying $t_3 + 1 \le n - 2t_3 + 1$.
%We have $[n-2t_3+1,n-t_3]\cap A_1=\emptyset$ since there is no rainbow solution and $t_3\in A_3$ and $[n-t_3+1,n]\subseteq A_2$, this implies $2t_3\leq n$ naturally.
Thus, by Observation~\eqref{Observation:A_3iff}, as $[n-2t_3+1,n-t_3]\cap A_3=\emptyset$, we have $[t_3, 2t_3-1]\subseteq A_3$. Note that this also implies $4t_3-1\leq n$.
We have $[1,t_3-1]\cap A_1=\emptyset$ since otherwise if $a_1\in [1,t_3-1]\cap A_1$, then we have $a_1+t_3\in [t_3+1,2t_3-1]\subseteq A_2$, resulting in a rainbow solution.
Clearly, Observation~\eqref{Observation:A_3iff} implies $A_3 \cap [2t_3, n-2t_3] = \lfloor \frac{n - 4t_3 + 1}{2} \rfloor$, and therefore $A_1 \cap [2t_3, n-2t_3] = \lfloor \frac{n - 4t_3 + 1}{2} \rfloor$.

Now we are ready to estimate $|A_1|+|A_2|+|A_3|$, by considering the following partition
$[n]=[1,t_3-1]\cup[t_3,2t_3-1]\cup[2t_3,n-2t_3]\cup[n-2t_3+1,n-t_3]\cup[n-t_3+1,M]\cup[M+1,n]$.
We have
\begin{multline*}
n + 1 + \left\lceil \frac{n}{2} \right\rceil = |A_1|+|A_2|+|A_3|\leq 0+3t_3+3\floor{\frac{n-4t_3+1}{2}}+0+3 \big( M-(n-t_3) \big)+2(n-M) \\
=3\ceil{\frac{n}{2}}-(n-M).
\end{multline*}
%Hence if $n$ is even or $M<n$ we cannot be in the equality case.
%If $n$ is odd and $M=n$, then
Therefore, $n$ is odd, and thus, we have $M = n$.
Since there is no rainbow solution and $t_3+t_3=2t_3$, we have $2t_3\notin A_3$. Hence by Observation~\ref{Observation:A_3iff}, we have $n-2t_3\in A_3$. Since there is no rainbow solution and $2t_3 - 1 \in A_2$, this implies $n-1 \notin A_1$. Since $[n - t_3 + 1, n] \subseteq A_2 \subseteq A_1$, we have $t_3=1$. Since $n-M < t_3$, we also have $M=n$. Then, Observation~\ref{Observation:A_3iff} implies $A_1=A_2=A_3$. Thus, we have $\abs{A_1} = \abs{A_2} = \abs{A_3} =\frac{n+1}{2}$. Since there is no rainbow solution and $1\in A_3$, no two consecutive integers are in $A_1 = A_2$. Therefore, we have $A_1=A_2=A_3=\{i\in[n]: 2|(i+1)\}$.

Now let us assume $t_3>\frac{n}{2}$. By Observation~\ref{Observation:A_3iff}, we have $A_3=[t_3,M]$. 
Then $|A_3|=M-t_3+1$, and thus, \eqref{equation:m=3_A_3} implies $t_3=\floor{\frac{n}{2}}+1$.
Then $A_3 \subseteq A_2$ and $[M,n]\subseteq A_2$ implies $\left[ \floor{\frac{n}{2}}+1,n \right]\subseteq A_2$.
Since there is no rainbow solution, we obtain $A_1 \cap \left[ \ceil{\frac{n}{2}}-1 \right]=\emptyset$. 
If $n$ is odd, this implies $A_1=A_2= \left[ \frac{n+1}{2},n \right]$, thus by maximality, we also have $A_3=A_1$, and we are done.
If $n$ is even, then by maximality, we have $A_1= \left[ \frac{n}{2},n \right]$, and either $A_2=A_1$ and $A_3=[\frac{n}{2}+1,n-1]$, or $A_2=[\frac{n}{2}+1,n]$ and $A_3=A_2$.

\section{Concluding Remarks}
In this paper, we studied the problems of avoiding rainbow solutions to the generalized Schur equation $x_1+x_2+\dots+x_m=x_{m+1}$. It would be interesting to obtain such results for other equations.  Furthermore, it would be natural to consider the measure theoretic version of our result.  Namely, let $A_1,A_2,\dots,A_k$ be measurable subsets of $[0,1]$.  Then one could consider the maximum sum and product of the Lebesgue measure of the sets $A_i$ provided that we avoid a rainbow solution of the Schur equation.

\section*{Acknowledgments}
%We thank the referee for their careful reading of the manuscript and many comments. 
The research of Gy\H{o}ri, Salia and Tompkins was supported by the National Research, Development and Innovation Office NKFIH, grants K132696 and SNN-135643. 
The research of Zhu was supported by NSFC grant 12401454, Basic Research Program of Jiangsu Province (BK20241361), Jiangsu Funding Program for Excellent Postdoctoral Talent grant
2024ZB179 and State-sponsored Postdoctoral Researcher program under number GZB202409. The research of He was supported by the National Natural Science Foundation of China, grants 12401445, Beijing Natural Science Foundation, grants 1244047 and China Postdoctoral Science Foundation, grants 2023M740207.

\bibliographystyle{abbrv}
\bibliography{references.bib}

\end{document}